\newtheorem{thm}{Theorem}[section]
\newtheorem{lem}[thm]{Lemma}
\newtheorem{cor}[thm]{Corollary}
\theoremstyle{definition}
\newtheorem{defi}[thm]{Definition}
\newtheorem{exam}[thm]{Example}
\newtheorem{rem}[thm]{Remark}
\newcommand { \ib }[1] {\textit{\textbf{#1}}}
\newcommand{\bd}{\stackrel{\rm bd}{\sim}}
\newcommand { \supp }{\mathop{\rm{supp}}\nolimits}
\DeclareMathOperator{\dens}{dens}
\newcommand{\R}{\mathbb{R}}
\newcommand{\Q}{\mathbb{Q}}
\newcommand{\Z}{\mathbb{Z}}
\newcommand{\N}{\mathbb{N}}
\newcommand{\X}{\mathbb{X}}
\newcommand{\Hb}{\mathbb{Hb}}
\begin{document}
\renewcommand{\ib}{\mathbf}
\renewcommand{\proofname}{Proof}
\renewcommand{\phi}{\varphi}
\newcommand{\conv}{\mathrm{conv}}

\keywords{cut-and-project sets, bounded distance equivalence, bounded remainder sets} 
\subjclass[2010]{52C23, 11J72, 11K38, 37A45}

\title[]{Weighted $1\times1$ cut-and-project sets in bounded
distance to a lattice}
\author{Dirk Frettl\"oh}
\address{Technische Fakult\"at, Bielefeld University}
\email{dfrettloeh@techfak.uni-bielefeld.de}

\author{Alexey~Garber}
\address{School of Mathematical \& Statistical Sciences, The University of Texas Rio Grande Valley, 1 West University Blvd., Brownsville, TX 78520, USA.}
\email{alexeygarber@gmail.com}


\date{\today}

\begin{abstract}
Recent results of Grepstad and Larcher are used to show that 
weighted cut-and-project sets with one-dimensional physical space 
and one-dimensional internal space are bounded distance equivalent to 
some lattice if the weight function $h$ is continuous on the internal space,
and if $h$ is either piecewise linear, or twice differentiable with bounded
curvature. 
\end{abstract}

\maketitle

\section{Introduction}

A \emph{Delone set} is a non-empty set $\Lambda$ of points in some metric space 
$\X$ such that (1) there is $r>0$ such that each open ball of radius $r$ centered at a point $x$ of $\Lambda$
contains no other points of $\Lambda$, and (2) there is $R>0$ such 
that each closed ball of radius $R$ centered at a point $x$ of $\Lambda$ contains at least one more point of $\Lambda$. 
Depending on the context, Delone sets are also called separated nets,
or $(r,R)$-sets. Two Delone sets $\Lambda, \Lambda'$ in the same metric space are called 
\emph{bounded distance equivalent} ($\Lambda \bd \Lambda'$) if there 
is a bijection $\phi \colon  \Lambda \to \Lambda'$ such that $|x - \phi(x)|$ 
is uniformly bounded.
In 1993 M.~Gromov asked whether any Delone set $\Lambda$ in $\R^2$
is \emph{bilipschitzequivalent} with $\Z^2$ \cite{G}; i.e., whether
there is a bijection from $\Lambda$ to $\Z^2$ such that the bijection is Lipschitz continuous 
in both directions. In 1998 D.~Burago and B.~Kleiner, and independently 
C.~McMullen, gave a negative answer \cite{BK,McM}. 
The analogous question for the hyperbolic plane $\Hb^2$ was
answered positively by Bogopolskii \cite{Bog} by showing that all 
Delone sets in $\Hb^2$ are bounded distance equivalent to each other.
Bounded distance equivalence implies bilipschitzequivalence.

Even before that physicists asked whether some given crystallographic
or quasicrystallographic Delone set $\Lambda$ in $\R^2$ or $\R^3$ 
has an ``average lattice'' of the form $a \Z^2$; i.e. whether
there is $a>0$ such that $\Lambda \bd a \Z^2$. A \emph{lattice} 
in $\R^d$ is the $\Z$-span $\langle v_1, \ldots, v_d \rangle_{\Z}$ 
of $d$ linearly independent vectors $v_i \in \R^d$. In 
\cite{DO1} it is shown that any two lattices in $\R^d$ 
with equal density are bounded distance equivalent. 
In \cite{DO2} a sufficient condition for a cut-and-project set (CPS)
being bounded distance equivalent to some lattice with the same
density is given. For a definition of a CPS
see below. There is no precise mathematical definition of
a quasicrystal; but often when speaking of a (mathematical) 
quasicrystal a CPS set is meant. 

Recently bounded distance equivalence of Delone sets 
did get some attention, see e.g. 
\cite{Lac, GLev, H, HK, HKK} and references therein.
A frequently exploited connection is the correspondence
between (certain) CPS and (certain) bounded remainder sets 
for (discrete) toral rotations. Given a set $S \subset [0,1)$ and
some (irrational) $\alpha>0$ the \emph{deficiency} (or 
discrepancy) of $S$ with respect to some $x \in \R$ is
\[ D_n(S,x) := \sum_{k=0}^{n-1} 1_S(x+k \alpha \bmod 1) - n \lambda(S), \]
where $\lambda$ denotes Lebesgue measure on $\R$.  
A set $S \subset [0,1)$ is called a \emph{bounded remainder set} 
(BRS) with respect to $\alpha$ if there is $C>0$ such that for 
almost all $x$ we have $\sup\limits_{n\in\N} |D_n(S,x)|<C$. As we will see, 
for our purposes the $x$ plays no role; it is included in the 
definition only because in some contexts there is an exceptional
null-set of $x$ to consider.

A profound theorem of Kesten \cite{Kes} shows 
that an interval $[a,b] \subset [0,1)$ is a BRS
for the discrete toral rotation $n \alpha \bmod 1$ on the one-dimensional torus 
if and only if $b-a \in \Z+\alpha \Z$. Applied to CPS this 
proves for instance that the Fibonacci sequence, defined by
a CPS with lattice $\langle (1,1)^T, (\frac{1+\sqrt{5}}{2},
\frac{1-\sqrt{5}}{2})^T \rangle_{\Z}$ and window 
$[0,\frac{1+\sqrt{5}}{2})$ is bounded distance
equivalent to some lattice, whereas the Half-Fibonacci
sequence using the same lattice but window $[0,\frac{1+\sqrt{5}}{4})$,
is not bounded distance equivalent to any lattice. See Examples \ref{ex:fib1} and \ref{ex:fib2} for more details.

In this paper we exploit the connection between continuous toral 
rotations and weighted cut-and-project sets. Our main result
Theorem \ref{thm:diraccomb} uses two theorems
of \cite{GL} on continuous toral rotations. It shows that
many weighted $1 \times 1$ CPS where the window is an interval
and the weight function $h$ is continuous and supported on $W$
(hence $h$ equals 0 at the endpoints of the interval)
are bounded distance equivalence to some lattice, with no
restrictions on the length of the window. This is in strong contrast
with the discrete case, see Kesten's theorem mentioned above, 
respectively the Half-Fibonacci example.

\textbf{Notation:} Throughout the paper, $\lambda$ denotes 
$d$-dimensional Lebesgue measure (where $d=1$ or $d=2$, depending 
on the context). The Dirac measure in $x$ is denoted $\delta_x$. 

\section{Cut-and-project sets} \label{sec:cps}

A \emph{cut-and-project set} (CPS, aka \emph{model set}) 
$\Lambda$ is given by a collection of maps and spaces:
\[ \begin{array}{ccccc}
G & \stackrel{\pi_1}{\longleftarrow} & G \times H & 
\stackrel{\pi_2}{\longrightarrow} & H\\
\cup & & \cup & & \cup\\
\Lambda & & \Gamma & & W
\end{array} \]
where in general $G$ and $H$ are locally compact abelian groups.
Furthermore, $\Gamma$ is a lattice (i.e., a discrete cocompact subgroup)
in $G \times H$, $W$ is a relatively compact 
set in $H$, and $\pi_1$ and $\pi_2$ are projections to $G$ and
to $H$ respectively, such that 
$\pi_1|_\Gamma$ is one-to-one, and $\pi_2(\Gamma)$ is
dense in $W$. Then 
\[ \Lambda = \{ \pi_1(x) \, | \, x \in \Gamma, \, \pi_2(x) \in W \} \]
is called a CPS. 

Throughout this paper we will always have $G=\R$ and $H=\R$,
hence we call the resulting CPS sometimes $1 \times 1$-CPS
in order to distinguish them from CPS where $G$ or $H$ have higher 
dimension.
For the sake of clarity, we will refer to these spaces as $G$ and $H$ (rather than 
$\R$ and $\R$) in order to distinguish the space $G$ 
supporting the CPS $\Lambda$ (often called \emph{direct space})
from the space $H$ supporting $W$ (often called \emph{internal space}).

It does not really matter
whether $\Gamma$ is a proper lattice, or a translate of some lattice,
since translating the lattice by $z$ yields the same CPS (shifted 
by $\pi_1(z)$) as translating the window $W$ by $\pi_2(z)$.
In general, translating the window corresponds 
just to changing the CPS $\Lambda$ to another CPS $\Lambda'$ that is 
locally indistinguishable from $\Lambda$ provided $\pi_2(\Gamma)$ has empty intersection with boundaries of windows for $\Lambda$ and $\Lambda'$, in the sense that a copy
of each local piece of $\Lambda$ appears in $\Lambda'$, and vice versa.

The \emph{density} of a CPS is the average number of points
per unit area. It is known that the density of a CPS exists
and equals
\begin{equation} \label{eq:dens-cps}
\dens{\Lambda} =  \frac{\lambda(W)}{|\det(M_{\Gamma})|}, 
\end{equation}
where $M_{\Gamma}$ is the matrix whose columns are the spanning 
vectors of the lattice $\Gamma$. See \cite[Thm.~7.2]{BG} and
references there for details.

\begin{exam} \label{ex:fib1}
%
Probably one of the most prominent CPS is the Fibonacci sequence. The corresponding CPS has $G=\langle (1,0)^T \rangle_{\R}$, 
$H\langle (0,1)^T \rangle_{\R}$, $W=[-\frac{1}{\tau}, 1[ \subset H$, lattice 
$\Gamma = \langle \big( \begin{smallmatrix} 1\\ 1 \end{smallmatrix} 
\big), \big( \begin{smallmatrix} \tau \\ -\tau^{-1} 
\end{smallmatrix} \big) \rangle_{\Z}$, and $\pi_1$ and $\pi_2$ are 
orthogonal projections to $G$, respectively to $H$.

See also Example \ref{ex:fib2} below.
\end{exam}

Weighted CPS are a generalisation of the notion of a CPS. A 
\emph{weighted} CPS is a Dirac comb $\sum\limits_{x \in \Lambda}
h(x^{\star}) \delta_x$, where $h\colon H\to \R$, $h|_{H\setminus W}=0$, and the restriction $h|_W$ is continuous, and 
$x^{\star} := \pi_2(\pi^{-1}_1(x))$. Here, $\pi^{-1}_1(x)$ makes
sense since $\pi_1|_{\Gamma}$ is one-to-one. A weighted CPS with
constant weight function $h(x)=1$ for all $x \in W$(and $h(x)=0$
for $x \notin W$) is just an ordinary CPS, viewed as a measure. 
Weighted Dirac combs and weighted CPS are relevant in the study
of diffraction properties of CPS, see \cite{BG} and references
therein. It is easy to see that the density formula \eqref{eq:dens-cps}
for CPS generalises to weighted CPS as follows:
\begin{equation} \label{eq:dens-weighted}
\dens{\Lambda} =  \frac{\int_{W} h(t) dt}{|\det(M_{\Gamma})|}.
\end{equation}

\section{BRS for continuous rotations and weighted CPS}

In order to utilize the results of \cite{GL} we generalize the notion
of bounded distance equivalence from point sets to measures.

\begin{defi}\label{def:bdmes}
Two measures $\mu, \nu$ on $\R$ are \emph{bounded distance 
equivalent}, if there is $C>0$ such that for all $a,b \in \R$ with $a<b$ 
\[ |\mu([a,b])-\nu([a,b])| < C. \]
\end{defi}

\begin{rem}
The only restriction we impose on the measures $\mu$ and $\nu$ in the definition above is that all intervals (open, closed, semi-open) are measurable with respect to $\mu$ and $\nu$. However, in the further discussion we will mostly work with multiples of standard Lebesgue measure and with (weighted) Dirac comb measures defined for discrete sets, see the definition below.

It is easy to see that the relation above defines an equivalence relation.
\end{rem}

Since a point set $\Lambda$ in $\R$ can be identified with a measure
$\sum\limits_{x \in \Lambda} \delta_x$ it is not hard to see that Definition \ref{def:bdmes} reduces for Delone sets to the definition of bounded 
distance equivalence above. Nevertheless, we spell out the details 
in the proof of the next lemma. 
\begin{lem}
Two Delone sets $\Lambda, \Lambda'$ in $\R$ are bounded distance
equivalent as point sets if and only if the corresponding Dirac
combs $\omega = \sum\limits_{x \in \Lambda} \delta_x$ and 
$\omega' = \sum\limits_{x' \in \Lambda'} 
\delta_{x'}$ are bounded distance equivalent as measures.
\end{lem}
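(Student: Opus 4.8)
The plan is to prove the two implications separately, using one of the two defining Delone properties for each: uniform discreteness (minimal gap $\ge r$) for the forward direction, and relative density (consecutive gaps bounded by some $\rho$) for the converse. Throughout I identify the Dirac combs with counting measures, so that $\omega([a,b]) = \#(\Lambda \cap [a,b])$ and $\omega'([a,b]) = \#(\Lambda' \cap [a,b])$.

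For the forward implication (point sets $\bd$ $\Rightarrow$ measures $\bd$), I would start from a bijection $\phi\colon \Lambda \to \Lambda'$ with $|x-\phi(x)| \le K$ for all $x$. Since $\phi$ is injective and moves points by at most $K$, every point of $\Lambda \cap [a+K,b-K]$ maps into $\Lambda'\cap[a,b]$, and every point of $\Lambda'\cap[a,b]$ is the image of some point of $\Lambda\cap[a-K,b+K]$. This sandwiches both $\omega([a,b])$ and $\omega'([a,b])$ between $\#(\Lambda\cap[a+K,b-K])$ and $\#(\Lambda\cap[a-K,b+K])$. The difference of these two outer counts equals the number of $\Lambda$-points in the two boundary strips of length $2K$, which by uniform discreteness is at most $C:=2(\lfloor 2K/r\rfloor+1)$, independent of $a,b$. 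Hence $|\omega([a,b])-\omega'([a,b])| < C$. The degenerate case $b-a<2K$ is dispatched by bounding each count directly via uniform discreteness.

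For the converse, which I expect to be the crux, suppose $|\omega([a,b])-\omega'([a,b])|<C$ for all $a<b$, and I would construct an explicit order-preserving (rank) matching. Fix a reference point $c$, index $\Lambda=\{x_i\}_{i\in\Z}$ and $\Lambda'=\{x'_i\}_{i\in\Z}$ increasingly so that $x_0, x'_0$ are the first points exceeding $c$, and set $\phi(x_i):=x'_i$, which is visibly a bijection. To bound the displacement, fix $i\ge 0$; then $\#(\Lambda\cap(c,x_i]) = i+1$, while the hypothesis applied to $(c,x_i]$ forces $k:=\#(\Lambda'\cap(c,x_i])$ to satisfy $|k-(i+1)|<C$. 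Thus the $\Lambda'$-points bracketing $x_i$ lie within $C$ index-steps of $x'_i$, and since consecutive gaps of $\Lambda'$ are at most $\rho$ by relative density, we obtain $|x_i-\phi(x_i)| = |x_i-x'_i| < \rho(C+1)$. The case $i<0$ is symmetric, and the finitely many points near $c$ require only routine bookkeeping of the indexing.

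The main obstacle is this second direction: one must convert a uniform bound on the discrepancy of the counting measures into a uniformly bounded bijection. The decisive idea is that matching by rank is exactly the right choice, since a bounded count discrepancy says precisely that points of equal rank occur at nearby positions, and relative density then turns a bounded difference of ranks into a bounded difference of positions. The only real care needed is in fixing a consistent two-sided indexing relative to $c$; neither that nor the boundary points presents a genuine difficulty.
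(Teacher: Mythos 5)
Your proof is correct. Its converse direction is essentially the paper's argument: index both sets increasingly, match points of equal rank, and use the bounded count discrepancy together with the Delone property to bound $|x_i-x_i'|$. The forward direction, however, takes a genuinely different and more self-contained route. The paper first asserts, without proof, that the existence of \emph{some} bounded-distance bijection implies that the rank matching $x_i\mapsto x_i'$ is itself of bounded distance, and only then estimates the count discrepancy; that assertion is a standard fact for point sets on a line, but proving it requires essentially the same counting argument as the converse, so it is a gap in the paper's exposition which your argument simply avoids: you sandwich both $\omega([a,b])$ and $\omega'([a,b])$ between $\#(\Lambda\cap[a+K,b-K])$ and $\#(\Lambda\cap[a-K,b+K])$ using only injectivity and surjectivity of the given bijection $\phi$, and then invoke uniform discreteness on the two boundary strips of length $2K$. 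A second point where your write-up is sharper: in the converse you turn a bounded difference of ranks into a bounded difference of positions via relative density (consecutive gaps of $\Lambda'$ bounded by $\rho$), which is the correct Delone constant to use; the paper instead notes that $[x_i,x_i'[$ contains at most $|x_i'-x_i|/r$ points of $\Lambda'$ (uniform discreteness) and concludes $|x_i'-x_i|<Cr$, an implication that runs the wrong way, since an upper bound on the number of points in an interval yields no upper bound on its length --- one needs the lower bound on the count that relative density provides, exactly as in your argument. Only trivial bookkeeping remains on your side: the hypothesis in Definition~\ref{def:bdmes} is stated for closed intervals, so applying it to $(c,x_i]$ changes the constant by at most $2$ (atoms possibly sitting at $c$ or $x_i$), and, as you say, the degenerate case $b-a<2K$ in the forward direction follows directly from uniform discreteness.
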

\begin{proof}
Without loss of generality let $\Lambda = \{ \ldots, x_{-1}, x_0=0, x_1, 
\ldots \}$ (with $x_i<x_j$ if $i<j$) and $\Lambda' = \{ \ldots, x'_{-1}, 
x'_0=0, x'_1, \ldots \}$ (with $x'_i<x'_j$ if $i<j$). Let $\omega$ 
respectively $\omega'$ be the corresponding Dirac combs. Let $r>0$ be such a number that if $i\neq j$, then $|x_i-x_j|\geq r$ and $|x_i'-x_j'|\geq r$. The constant $r$ can be taken as the smallest of two smaller radii in the Delone property of $\Lambda$ and $\Lambda'$.

If there is a bounded distance bijection between $\Lambda$ 
and $\Lambda'$ then $x_i \mapsto x'_i$ is a bounded distance bijection, 
too. Hence there is $C'>0$ such that $|x_i - x'_i|<C'$ for all $i$. 

Let $x'_{i+\ell}$ be the largest $x' \in \Lambda'$ with $x'<x_i$. By 
the Delone property the interval $[x'_i,x_i]$ contains at most 
$\frac{|x_i - x'_i|}{r}+1$ points of $\Lambda'$, hence
\[ |\ell| \le \frac{|x_i - x'_i|}{r}+1 < \frac{C'}{r}.\] 
Thus the difference 
\[ |\omega([a,b]) - \omega'([a,b])| = | \sum\limits_{x \in \Lambda \cap [a,b]} 
\delta_x([a,b]) -  \sum\limits_{x' \in \Lambda' \cap [a,b]} \delta_{x'}([a,b]) | \]
is bounded by the number of points $x_i \in [a,b]$ such that $x'_i \notin
[a,b]$ (or vice versa). Thus 
\[ |\omega([a,b]) - \omega'([a,b])| < 2 \frac{C'}{r},  \]
where $C'$ and $r$ depend only on $\Lambda$ and $\Lambda'$. 

Conversely, if $|\omega([-n,n]) - \omega'([-n,n])| < C$ for all $n$,
then the number of points in $\Lambda \cap [-n,n]$ deviates at most 
by $C$ from the number of points in $\Lambda' \cap [-n,n]$. For $i\geq 0$, if 
$x_i \in [-n,n]$ but $x'_i \notin [-n,n]$, then $[x_i, x'_i[$ can
contain at most $\frac{|x'_i-x_i|}{r}$ points of $\Lambda'$; again 
by the Delone property of $\Lambda'$. Hence 
\[ \frac{|x'_i - x_i|}{r} < C \mbox{ respectively } |x'_i-x_i| < Cr. \] 
where $C$ and $r$ depend only on $\Lambda$ and $\Lambda'$. The same holds
for $x_i,x'_i$ with $i<0$. 
\end{proof}

The paper \cite{GL} studies BRSs of the continuous 
analogue of the discrete toral rotations above. We state two definitions
from \cite{GL}, slightly simplified for our purposes. 
\begin{defi}
 Let $x = (x_1, x_2) \in [0,1]^2$, and let $\alpha \in \R \setminus \Q$. 
 We say that the function $X  \colon  [0, \infty) \mapsto [0,1]^2$ defined by
 \begin{equation*}
  X(t) = \left( x_1+t \bmod 1, x_2+\alpha t \bmod 1 \right)
 \end{equation*}
is the two-dimensional continuous irrational rotation with slope $\alpha$ 
and starting point $x$.
\end{defi}

The notion of deficiency translates as follows.
\begin{defi}
\label{def:cbrs}
Let $P \subset [0,1]^2$ be an arbitrary measurable set with 
Lebesgue measure $\lambda (P)$. We say that $P$ is a \emph{bounded 
remainder set} (BRS) for the continuous irrational rotation with slope $\alpha>0$ 
and starting point $x=(x_1, x_2) \in [0,1]^2$ if the distributional error
\begin{equation}
 \label{eq:discrepancy}
 \Delta_t(P, \alpha, x) = \int_0^t 1_P \left( x_1+s \bmod 1, x_2+\alpha s 
\bmod 1 \right) \, ds  - t \lambda (P)
\end{equation}
is uniformly bounded for all $t > 0$. Here, $1_P$ denotes the characteristic 
function for the set $P$.
\end{defi}

The following simple observation will be useful in the sequel. It can be
shown easily by spelling out the definition (resp., definitions, since it 
holds in both cases, discrete toral rotations and continuous toral rotations). 
\begin{lem} \label{lem:sum-diff-brs}
Let $P, P'$ be BRSs. If $P \cap P' = \varnothing$ then the union 
$P \cup P'$ is a BRS, too. If $P'\subset P$ then the difference 
$P \setminus P'$ is a BRS, too.
\end{lem}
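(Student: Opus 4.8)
The plan is to exploit the fact that the distributional error $\Delta_t$ (and, in the discrete setting, the deficiency $D_n$) depends \emph{linearly} on the set through its indicator function and its Lebesgue measure, so that disjoint unions and nested differences turn into sums and differences of the corresponding discrepancy functions. First I would record the two elementary set-theoretic identities that drive everything: if $P \cap P' = \varnothing$ then $1_{P \cup P'} = 1_P + 1_{P'}$ and $\lambda(P \cup P') = \lambda(P) + \lambda(P')$, whereas if $P' \subset P$ then $1_{P \setminus P'} = 1_P - 1_{P'}$ and $\lambda(P \setminus P') = \lambda(P) - \lambda(P')$.

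Next I would substitute these into the definition \eqref{eq:discrepancy} and use linearity of the integral, keeping the slope $\alpha$ and starting point $x$ fixed so that in all three expressions one integrates over the \emph{same} orbit. This yields the clean additivity relations
\[ \Delta_t(P \cup P', \alpha, x) = \Delta_t(P, \alpha, x) + \Delta_t(P', \alpha, x) \]
in the disjoint case, and
\[ \Delta_t(P \setminus P', \alpha, x) = \Delta_t(P, \alpha, x) - \Delta_t(P', \alpha, x) \]
in the nested case. Since $P$ and $P'$ are assumed to be BRSs, there are constants $C, C'$ bounding $|\Delta_t(P, \alpha, x)|$ and $|\Delta_t(P', \alpha, x)|$ uniformly in $t$, so the triangle inequality gives a uniform bound $C + C'$ on the left-hand sides in both cases, which is exactly the BRS property for $P \cup P'$ respectively $P \setminus P'$.

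Finally, as the lemma is stated for both discrete and continuous toral rotations, I would note that the argument transfers verbatim to the discrete deficiency $D_n(S,x)$ by replacing the integral $\int_0^t$ with the finite sum $\sum_{k=0}^{n-1}$ and the term $t\lambda(P)$ with $n\lambda(P)$; the same linearity and triangle-inequality steps apply. I do not expect any genuine obstacle here: the content is a pure linearity computation. The only point requiring a little care is to make sure the same orbit (same $\alpha$ and same $x$) is used throughout, so that the three discrepancy functions can legitimately be compared term by term rather than merely bounded separately.
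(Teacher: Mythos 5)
Your proof is correct and is exactly the ``spelling out of the definition(s)'' that the paper invokes without writing down: the paper gives no explicit argument, and your linearity-of-the-discrepancy computation (disjoint union gives $\Delta_t(P\cup P')=\Delta_t(P)+\Delta_t(P')$, nested difference gives $\Delta_t(P\setminus P')=\Delta_t(P)-\Delta_t(P')$, then the triangle inequality) is the intended one, valid verbatim in both the continuous and the discrete setting. Your care about fixing the same $\alpha$ and $x$ throughout is the right, and only, subtle point.
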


Two of the main results in \cite{GL} are the following.
\begin{thm}
 \label{thm:mainpoly}
 For almost all $\alpha>0$ and every $x \in [0,1]^2$, every polygon 
 $P \subset [0,1]^2$ with no edge of slope $\alpha$ is a BRS for the 
 continuous irrational rotation with slope $\alpha$ and starting point $x$.
\end{thm}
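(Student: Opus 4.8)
The plan is to reduce the boundedness of the distributional error $\Delta_t(P,\alpha,x)$ to the boundedness of a single ``transfer function'' on the torus, and then to prove the latter by a Fourier estimate that plays the decay of the Fourier coefficients of $1_P$ against a Diophantine lower bound for the linear forms $k_1+k_2\alpha$. First I would expand $1_P-\lambda(P)$ into its Fourier series on $[0,1]^2$ and integrate termwise along the flow $X(s)=(x_1+s,x_2+\alpha s)$. The constant coefficient $\lambda(P)$ cancels the drift $-t\lambda(P)$, and each nonzero frequency $k=(k_1,k_2)$ integrates to $\frac{e^{2\pi i(k_1+k_2\alpha)t}-1}{2\pi i(k_1+k_2\alpha)}$, the denominator never vanishing because $\alpha$ is irrational. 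Splitting the two exponentials yields
\[
 \Delta_t(P,\alpha,x)=F(X(t))-F(x), \qquad F(z)=\sum_{k\neq 0}\frac{\widehat{1_P}(k)}{2\pi i(k_1+k_2\alpha)}\,e^{2\pi i\langle k,z\rangle},
\]
so that $F$ solves the cohomological equation $\partial_{(1,\alpha)}F=1_P-\lambda(P)$. Consequently $P$ is a BRS for \emph{every} starting point $x$ (and $|\Delta_t|\le 2\|F\|_\infty$) as soon as $F$ is bounded, for which it suffices that $\sum_{k\neq0}|\widehat{1_P}(k)|\,/\,|k_1+k_2\alpha|<\infty$. This first step is routine and isolates the real target.

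The two ingredients are a decay estimate for $\widehat{1_P}$ and a denominator estimate. For a polygon, integrating by parts edge by edge (equivalently applying the divergence theorem) produces, for each edge with unit direction $v_e$, a boundary term that is oscillatory unless $\langle k,v_e\rangle=0$; collecting the edges gives a bound of the shape $|\widehat{1_P}(k)|\ll_P |k|^{-1}\sum_e(1+|k|\,|\langle\hat k,v_e\rangle|)^{-1}$. Thus $\widehat{1_P}(k)$ decays like $|k|^{-2}$ for generic $k$ and only like $|k|^{-1}$ when $\hat k$ is nearly orthogonal to some edge, i.e.\ when $\hat k$ points close to an edge normal $n_e$. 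For the denominators I would invoke the metric theory of continued fractions: by a Borel--Cantelli argument applied to Khinchin's theorem, for almost every $\alpha$ there is $c(\alpha)>0$ with $|k_1+k_2\alpha|\ge c(\alpha)\,|k|^{-1}(\log|k|)^{-2}$ for all $k\neq0$. Writing $(1,\alpha)$ for the flow direction, one has $|k_1+k_2\alpha|=\sqrt{1+\alpha^2}\,|k|\,|\sin\theta|$, where $\theta$ is the angle between $\hat k$ and the flow-normal direction $(-\alpha,1)$; so a small denominator forces $\hat k$ close to $(-\alpha,1)$.

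The heart of the matter, and the precise point where the hypothesis ``no edge of slope $\alpha$'' enters, is that the two dangerous regimes are angularly separated. Slow Fourier decay requires $\hat k$ near an edge normal $n_e$, while a small denominator requires $\hat k$ near $(-\alpha,1)$; since no edge is parallel to $(1,\alpha)$, no $n_e$ is parallel to $(-\alpha,1)$, and these directions stay a fixed positive angle apart. I would make this quantitative with a dyadic-in-$|k|$ and angular decomposition. In the fixed cone around $(-\alpha,1)$ the coefficients enjoy the fast $|k|^{-2}$ decay (being bounded away from every $n_e$), and a shell $|k|\sim 2^m$ contributes $\int 2^{2m}\cdot\frac{|k|^{-2}}{2^m|\theta|}\,d\theta$, which after the Diophantine cutoff $\theta\gtrsim 2^{-2m}(\log)^{-2}$ is only $O(m\,2^{-m})$; in each cone around an $n_e$ the denominator is comparable to $|k|$ and absorbs the slow $|k|^{-1}$ decay, again giving $O(m\,2^{-m})$ per shell; elsewhere the shell contributes $O(2^{-m})$. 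Summing over $m$ gives $\sum_m \mathrm{poly}(m)\,2^{-m}<\infty$.

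The main obstacle is exactly this two-parameter bookkeeping: any bound that treats the worst denominator and the full lattice-point count of a shell simultaneously diverges, because it double-counts. One must use that the smallest values of $|k_1+k_2\alpha|$ are attained by correspondingly few lattice points, and that the slow-decay directions and the small-denominator direction never coincide --- which is guaranteed by the exclusion of edges of slope $\alpha$. Once the series is shown to converge absolutely, $F$ is continuous and bounded on the torus, whence $|\Delta_t(P,\alpha,x)|\le 2\|F\|_\infty$ for all $t>0$ and all $x$. This establishes the BRS property for every $\alpha$ lying in the full-measure Diophantine set and avoiding the finitely many slopes of the edges of $P$, which is the assertion of the theorem.
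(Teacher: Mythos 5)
This statement is not proved in the paper at all: it is quoted verbatim from Grepstad--Larcher \cite{GL} as an external input, and the only thing the paper adds is a remark sketching how the proof in \cite{GL} goes, namely by reduction to one-dimensional \emph{bounded remainder functions}. Concretely, \cite{GL} convert $\int_0^t 1_P(X(s))\,ds$ (up to bounded error) into a Birkhoff sum $\sum_{k<N} f(x_2+k\alpha)$ over the discrete rotation, where $f$ is the width function of $P$ in the flow direction; for a polygon with no edge of slope $\alpha$ this $f$ is a continuous, piecewise linear, $1$-periodic function (a finite sum of hat functions), its Fourier coefficients decay like $1/n^2$, and \cite[Props.~2.5 and 2.6]{GL} show such functions have bounded remainders under the explicit continued-fraction condition \eqref{eq:frac-alpha}. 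Your route is genuinely different: you stay in two dimensions, solve the cohomological equation for the flow with the full double Fourier series of $1_P$, and trade the decay of $\widehat{1_P}$ against the small denominators $|k_1+k_2\alpha|$ via an angular decomposition. The mechanism you isolate --- slow Fourier decay lives near edge normals, small denominators live near $(-\alpha,1)^T$, and ``no edge of slope $\alpha$'' says precisely that these directions stay separated --- is correct, and it is the honest two-dimensional counterpart of the reason why \cite{GL}'s width function is continuous rather than having jumps.

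Two caveats. First, your core estimate is a continuum heuristic rather than a proof: the shell integral $\int 2^{2m}\cdot 2^{-2m}(2^m\theta)^{-1}\,d\theta$ pretends that lattice points have uniform angular density, which is exactly what fails near the cutoff --- the sector $\theta\sim 2^{-2m}$ has area $\ll 1$, yet one actual lattice point there contributes $\sim m^2 2^{-m}$, already exceeding your claimed $O(m2^{-m})$ per-shell total. You correctly name the missing ingredient (the smallest denominators are attained by few lattice points) but do not supply it; what makes it work is the spacing of $\{k_2\alpha \bmod 1\}$ coming from the continued-fraction/three-distance structure (under the a.e.\ Khinchin bound, at most $O(j)$ integers $k_2\in[2^m,2^{m+1})$ satisfy $\|k_2\alpha\|\lesssim j\,2^{-m}m^{-2}$), which gives a per-shell bound $O(m^3 2^{-m})$ --- still summable, so the argument is completable. (Exchanging $\int_0^t$ with the non-absolutely-convergent Fourier series of $1_P$ also needs a word, e.g.\ Fej\'er means and dominated convergence, using that the flow line meets $\partial P$ in a null set of times.) Second, your exceptional set of $\alpha$ is purely metric (Borel--Cantelli), so you prove the theorem as literally stated, but you lose the explicit condition \eqref{eq:frac-alpha} of \cite{GL}, which is what this paper actually uses afterwards: Corollary \ref{cor:quad} and the Fibonacci example need the result for quadratic irrationals, a null set that an almost-everywhere statement cannot see. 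Your method does extend to badly approximable $\alpha$ (the spacing argument only gets easier there), but that extension would have to be made explicit for the paper's applications to survive.
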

\begin{thm}
 \label{thm:mainconvex}
 For almost all $\alpha>0$ and every $x \in [0,1]^2$, every convex set 
 $P \subset [0,1]^2$ whose boundary $\partial P$ is a twice continuously 
 differentiable (regular) curve with positive curvature at every point is a BRS for the 
 continuous irrational rotation with slope $\alpha$ and starting point $x$. 
\end{thm}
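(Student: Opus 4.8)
The plan is to prove the statement by Fourier analysis on the torus $\mathbb{T}^2 = \R^2/\Z^2$, turning the boundedness of the distributional error $\Delta_t(P,\alpha,x)$ into the convergence of a single numerical series built from the Fourier coefficients of $1_P$ and the small divisors $k_1+\alpha k_2$. First I would expand the indicator as $1_P(y)=\sum_{k\in\Z^2}c_k\,e^{2\pi i\langle k,y\rangle}$ with $c_0=\lambda(P)$, and integrate term by term along the flow $s\mapsto(x_1+s,x_2+\alpha s)$. The $k=0$ term contributes exactly $t\lambda(P)$ and cancels, while each $k\neq0$ contributes $c_k\,e^{2\pi i\langle k,x\rangle}\frac{e^{2\pi i(k_1+\alpha k_2)t}-1}{2\pi i(k_1+\alpha k_2)}$ (here $\alpha$ irrational guarantees $k_1+\alpha k_2\neq0$). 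Using $|e^{i\theta}-1|\le2$ one obtains the $x$- and $t$-independent majorant
\[ |\Delta_t(P,\alpha,x)|\le\frac1\pi\sum_{k\neq0}\frac{|c_k|}{|k_1+\alpha k_2|}, \]
so it suffices to show this series converges for almost all $\alpha$; boundedness is then uniform in both $x$ and $t$, giving the BRS property for every $x$ simultaneously. (Term-by-term integration needs justification since $1_P$ is discontinuous, but once the majorant is finite the right-hand side is an absolutely convergent, continuous function of $(x,t)$ which one checks equals $\Delta_t$.)

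Second, I would control the numerator via the classical decay estimate for smooth convex bodies: because $\partial P$ is $C^2$ with strictly positive curvature, applying the divergence theorem to $c_k=\int_P e^{-2\pi i\langle k,y\rangle}\,dy$ reduces it to a boundary oscillatory integral whose phase has exactly two nondegenerate stationary points (the boundary points whose outer normal is parallel to $k$); stationary phase then yields $|c_k|\le C_P(1+|k|)^{-3/2}$, the positive curvature entering through the nonvanishing Hessian. The constant $C_P$ depends on $P$ but the decay rate does not, which is precisely what lets the exceptional null set of $\alpha$ be taken the same for all admissible $P$.

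Third --- and this is the \emph{main obstacle} --- I would prove that $\sum_{k\neq0}(1+|k|)^{-3/2}|k_1+\alpha k_2|^{-1}<\infty$ for almost all $\alpha$. The terms with $k_2=0$ give $\sum_{k_1\neq0}|k_1|^{-5/2}<\infty$ unconditionally. For fixed $q=|k_2|\ge1$ I would split the $k_1$-sum into a resonant plateau $|k_1|\lesssim q$, where $(1+|k|)^{-3/2}\asymp q^{-3/2}$ and the divisors $|k_1+\alpha k_2|$ run through $\|q\alpha\|,1,2,\ldots$, contributing $\lesssim q^{-3/2}\big(\|q\alpha\|^{-1}+\log q\big)$, plus a tail $|k_1|\gg q$ contributing $O(q^{-3/2})$. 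Summing the harmless pieces leaves the essential series $\sum_{q\ge1}q^{-3/2}\|q\alpha\|^{-1}$, where $\|\cdot\|$ is the distance to the nearest integer. This is a purely metric diophantine statement, independent of $P$, which I would establish by partial summation fed by the a.e.\ bound $\sum_{q\le Q}\|q\alpha\|^{-1}=O\big(Q(\log Q)^2\big)$ (the precise power of the logarithm is immaterial); this bound itself follows from $\int_0^1\min(\|q\alpha\|^{-1},Q)\,d\alpha=O(\log Q)$ together with Borel--Cantelli/Borel--Bernstein control of the rare large partial quotients. Since $3/2>1$, Abel summation against $q^{-3/2}$ converts the $O(Q(\log Q)^2)$ growth into a convergent series. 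The delicate point is that one cannot use a pointwise lower bound on $\|q\alpha\|$ --- the best a.e.\ bound $\|q\alpha\|\gg q^{-1}(\log q)^{-1-\varepsilon}$ is too weak and makes the series diverge --- so convergence is genuinely an \emph{averaged} phenomenon and the argument must be run on the level of partial sums over $q$ rather than term by term.
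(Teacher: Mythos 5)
Your proposal is essentially correct (modulo standard details you yourself flag, such as justifying term-by-term integration by mollification), but it takes a genuinely different route from the one behind this statement. Note first that the paper does not prove Theorem~\ref{thm:mainconvex} at all: it is imported from Grepstad--Larcher \cite{GL}, and the paper only describes their method in a remark. That method reduces the two-dimensional continuous-rotation problem to a one-dimensional discrete one: slicing the flow at integer times turns $\Delta_t$ into Birkhoff sums $\sum_k f(x_2+k\alpha)$ of the $\Z$-periodization of the width function $f$ of $P$ in direction $(1,\alpha)^T$; positive curvature enters through the square-root behaviour of $f$ at the endpoints of its support (``dome functions''), and \cite[Props.~2.5 and 2.6]{GL} show such periodizations are bounded remainder functions for every $\alpha$ satisfying the explicit continued-fraction condition \eqref{eq:frac-alpha}. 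You instead work directly in two dimensions: Fourier expansion of $1_P$, the classical Herz/stationary-phase bound $|c_k|\ll_P (1+|k|)^{-3/2}$ (curvature entering through the nondegenerate stationary points), and a.e.\ convergence of the small-divisor series $\sum_q q^{-3/2}\|q\alpha\|^{-1}$ by averaged metric estimates; your identification of the main obstacle is exactly right --- pointwise a.e.\ lower bounds on $\|q\alpha\|$ are too weak, and one must control partial sums $\sum_{q\le Q}\|q\alpha\|^{-1}$ --- and your quantifier bookkeeping (exceptional null set of $\alpha$ independent of $P$ and $x$, bound uniform in $x$ and $t$) is correct. The trade-off is this: your argument is classical and self-contained, but Borel--Cantelli produces an inexplicit null set, so it certifies no \emph{specific} $\alpha$ --- in particular not quadratic irrationals such as the golden ratio --- whereas the condition \eqref{eq:frac-alpha} of \cite{GL} is explicit and holds for all $\alpha$ with bounded partial quotients, which is precisely what this paper exploits downstream (Remark~\ref{rem:almostall}, Corollary~\ref{cor:quad}, the Fibonacci example). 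Your approach could in fact recover that strength: for badly approximable $\alpha$ one has $\sum_{q\le Q}\|q\alpha\|^{-1}=O(Q\log Q)$ deterministically via a continued-fraction block decomposition, so your series converges for those $\alpha$ as well; but as written, your third step is purely metric and loses this information.
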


\begin{rem}\label{rem:curvature}
From a geometric point of view the curvature $\kappa(x)$ at $x\in \gamma$ can be defined for a twice continuously differentiable regular curve $\gamma$ as the reciprocal of the radius of a circle (or a line, in that case $\kappa(x)=0$) that gives the best approximation of $\gamma$ at $x$. Here regularity means that there exists a parametrization $\gamma = r(t)$ of the curve, for example a natural parametrization with its length, such that $\dot{r}$ is never a zero vector. Here dot denotes the derivative with respect to the variable $t$. We will also assume only regular parametrizations in the sequel.

If $\gamma = r(t)$ is a parametrization of a regular curve, then $\kappa(t)=\frac{|\dot{r}\times \ddot{r}|}{|\dot{r}|^3}$ where the numerator is the length of the cross-product in the ambient $3$-space. If $\gamma = r(s)$ is parametrized with its length, then $\kappa(s)=|r''(s)|$. If the curve is given by equation $y=f(x)$ in standard rectangular coordinate system, then $x$ can be treated as a parameter and $$\kappa(x)=\frac{|f''(x)|}{(1+(f'(x))^2)^{3/2}}.$$

Note, that the value of the curvature at given point $x\in \gamma$ does not depend on a (regular) parametrization of $\gamma$ in a neighborhood $x$ because the geometric description of the curvature given above.

We refer to \cite{Pog}, or almost any other differential geometry textbook, for more details about geometry of planar curves.
\end{rem}

To a BRS $P$ and an irrational slope $\alpha$
as above one can associate a weighted CPS
as follows; see also Figure \ref{fig:cps-weighted}. 
The direct space is $G=\big( \begin{smallmatrix} 1\\ \alpha \end{smallmatrix}
\big) \R$, the internal space is
the orthogonal complement $H= \big( \begin{smallmatrix} 1\\ \alpha 
\end{smallmatrix} \big)^{\perp}$ of $G$ in $\R^2$. The projections
$\pi_1$ and $\pi_2$ are the orthogonal projections to $G$, 
respectively to $H$, and $W=\pi_2(P)$.  
Since $P$ is connected, $W$ is a line segment in $H$, so we have 
$W = [h_1,h_2]$ for some $h_i \in H$. 
Because of the properties of $P$ (either positive curvature, or no
slope in direction $\alpha$) there is exactly one point $z \in [0,1]^2$
such that $\pi_2(z)=h_1$. Let $\Gamma \subset G \times H$
be $z+ \Z^2$. Hence $\Gamma$ is not actually a lattice here,
but a translation of the lattice $\Z^2$. This makes no difference,
see the remark in the definition of a CPS in Section \ref{sec:cps}. 
Since $\alpha$ is 
irrational, $\pi_1|_{\Gamma}$ is one-to-one, and $\pi_2(\Gamma)$
is dense in $W$. Let $\Lambda$ be the CPS defined by these data. 

\begin{figure}
\includegraphics[width=.9\textwidth]{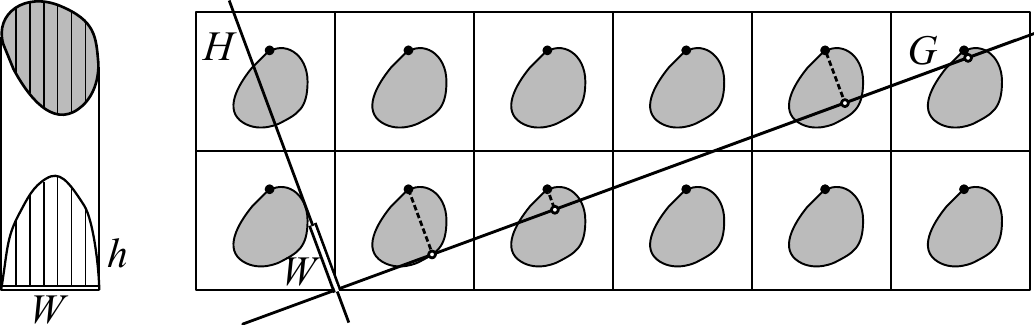}
\caption{A CPS tailored to BRS for continuous toral rotations.
The direct space $G$ is the line 
$(1, \alpha)^T \R$. 
The internal space $H$ is the orthogonal complement of $G$ in $\R^2$.
The CPS consists of the projected points $z+g$ of $z+\Z^2$ (black points) 
where $G$ intersects the adjacent convex set $y+P$ ($g \in \Z^2$). 
The weighted CPS
is obtained by attaching to each point $x = \pi_1(z+g) \in \Lambda$ 
the length of the intersection of $G$ with $g+P$. (The weights
are not shown in the image.) Hence the weight 
function $h$ on $W$ is given by the width of $P$ in direction $G$
(indicated on the left). \label{fig:cps-weighted}}
\end{figure}

The map $h  \colon  H \to \R$ is defined by letting $h(\pi_2(y))$ 
(for $y \in \R^2$) be the length of 
$\big( \begin{smallmatrix} 1\\ \alpha \end{smallmatrix} \big) \R \cap (y+P)$. 
Clearly, $h$ vanishes outside $W$, and
each $P$ fulfilling either the conditions of Theorem \ref{thm:mainpoly} 
or of Theorem \ref{thm:mainconvex} yields a map $h$ that is continuous 
on $H$: the support of $h$ is $W$, and $h(h_1)=h(h_2)=0$. Hence 
$\sum\limits_{x \in \Lambda} h(x^{\star}) \delta_x$ is a weighted CPS. 
(Recall that $x^{\star} = \pi_2(\pi_1^{-1}(x))$ for $x \in \Lambda$.)

Conversely, given a weighted CPS $\Lambda$ with data 
$G = \big( \begin{smallmatrix} 1\\ \alpha \end{smallmatrix} \big) \R,
H=\big( \begin{smallmatrix} 1\\ \alpha \end{smallmatrix} \big)^{\perp},
\Gamma = \Z^2, W=[a,b],\Gamma=\Z^2,h$, 
we can apply the opposite construction to obtain a candidate for 
a BRS with respect to a continuous rotation on the torus. One possible
problem is that the window for $\Lambda$ may be too large to fit into a 
standard fundamental domain of the lattice $\Z^2$. One way to handle 
this is to split the ``big'' CPS into smaller ones.

\begin{lem}
Let $n\in \N$.
A CPS $\Lambda$ with lattice $\Gamma = \Z^2$, $G = \big( \begin{smallmatrix} 1\\ \alpha \end{smallmatrix} \big) \R$, 
$H=G^{\perp}$, and $W=[a,b] \subset H$ is the union of $n^2$ CPS
with lattice translates $\Gamma_{k,\ell} = (k, \ell)^T
+ n\Z^2$ $(0 \le k,\ell \le n-1)$, and the same $G$, $H$, $W$.  
\end{lem}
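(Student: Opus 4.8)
The plan is to exploit the standard partition of the lattice $\Z^2$ into cosets of the sublattice $n\Z^2$. First I would observe that every $v=(v_1,v_2)^T\in\Z^2$ can be written uniquely as $v=(k,\ell)^T+n w$ with $w\in\Z^2$ and $0\le k,\ell\le n-1$ (take $k\equiv v_1$ and $\ell\equiv v_2\ (\mathrm{mod}\ n)$), so that
\[ \Z^2 = \bigsqcup_{0\le k,\ell\le n-1} \Gamma_{k,\ell}, \qquad \Gamma_{k,\ell}=(k,\ell)^T+n\Z^2, \]
is a disjoint union of $n^2$ cosets.

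Next I would check that each $\Gamma_{k,\ell}$ together with the same $G$, $H$, $W$ and projections $\pi_1,\pi_2$ constitutes a valid CPS. The set $\Gamma_{k,\ell}$ is a translate of $n\Z^2$, which is again a lattice (discrete and cocompact) in $\R^2$; as noted in Section \ref{sec:cps}, a translate of a lattice is admissible and yields the same type of CPS. Injectivity of $\pi_1$ on $\Gamma_{k,\ell}$ is inherited from injectivity of $\pi_1|_{\Z^2}$, since $\Gamma_{k,\ell}\subset\Z^2$. The only point requiring a short argument is the density of $\pi_2(\Gamma_{k,\ell})$ in $W$: since $\pi_2$ is linear, $\pi_2(\Gamma_{k,\ell}) = \pi_2((k,\ell)^T) + n\,\pi_2(\Z^2)$, and because $\alpha$ is irrational the subgroup $\pi_2(\Z^2)$ is dense in $H$; dilating by $n\neq 0$ and translating preserve density, so $\pi_2(\Gamma_{k,\ell})$ is dense in $H$, hence in $W$. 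Thus each $\Lambda_{k,\ell}=\{\pi_1(x)\mid x\in\Gamma_{k,\ell},\ \pi_2(x)\in W\}$ is a CPS with the stated data.

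Finally I would assemble the pieces. Intersecting the partition of $\Z^2$ with the condition $\pi_2(x)\in W$ and applying $\pi_1$ term by term gives
\[ \Lambda = \{\pi_1(x)\mid x\in\Z^2,\ \pi_2(x)\in W\} = \bigcup_{0\le k,\ell\le n-1}\Lambda_{k,\ell}, \]
and since $\pi_1|_{\Z^2}$ is injective and the cosets $\Gamma_{k,\ell}$ are pairwise disjoint, this union is in fact disjoint.

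I do not expect a genuine obstacle here; the content is the elementary coset decomposition of $\Z^2$ modulo $n\Z^2$. The only step that is not purely formal is the verification that each sublattice coset still projects densely into the window, which reduces to the observation that dilating and translating a dense subgroup of the one-dimensional internal space $H$ preserves density. I would keep an eye on the fact that $\Gamma_{k,\ell}$ is only a translate of a lattice rather than a lattice itself, but this is exactly the situation already sanctioned by the remark following the definition of a CPS in Section \ref{sec:cps}.
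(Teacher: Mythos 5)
Your proof is correct and follows the same route as the paper: the paper's entire argument is the one-line observation that $\Gamma=\bigsqcup_{k,\ell=0}^{n-1}\Gamma_{k,\ell}$, which is exactly your coset decomposition. The extra verifications you supply (that each $\Gamma_{k,\ell}$ is an admissible translated lattice, that $\pi_1$ remains injective on it, and that $\pi_2(\Gamma_{k,\ell})$ is still dense in $W$) are sound and simply make explicit what the paper leaves implicit.
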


\begin{proof}
It is enough to notice that $\Gamma=\bigsqcup_{k,\ell =0}^{n-1}\Gamma_{k,\ell}$.
\end{proof}

Hence we assume without loss of generality in the following that 
$W$ fits into the interior of the projection of the fundamental domain 
$[0,1)^2$ of $\Z^2$ along $G$. Otherwise we split the CPS into 
$n^2$ smaller ones as in the lemma above for appropriate large enough $n$. Such a number $n$ exists because the projection of the fundamental domain of $n\Z^2$ is $n$ times bigger than the projection of the fundamental domain of $\Z^2$, and the window $W$ is bounded.

Now we choose a compact set $P \subset [0,1]^2$ such that for 
$z \in W$ the value $h(z)$ equals the length of
$(z+\big( \begin{smallmatrix} 1\\ \alpha \end{smallmatrix} \big) \R) \cap P$. 
(For instance, if $h(z) \ge 0$, then
$P$ can be the region between the graph of $\frac{1}{2} h(z)$ and 
the graph of $-\frac{1}{2} h(z)$.) Now again, the values of $h$ may 
be too large to fit $P$ into $[0,1)^2$. Hence, if needed, we may rescale 
$h$ by some appropriate factor $c'>0$ such that $P$ fits into $[0,1)^2$. 

\begin{lem} \label{lem:wcps-brs}
Let $\omega = \sum\limits_{x \in \Lambda} h(x^{\star}) \delta_x$ and $P$, that depends on $h$, 
be as in the preceding construction. The  weighted CPS $\omega$ is 
bounded distant equivalent to $c \lambda$ for some $c>0$, if and
only if $P$ is a BRS with respect to $\alpha$.
\end{lem}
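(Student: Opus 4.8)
The plan is to translate the continuous orbit integral $\Delta_t(P,\alpha,x)$ directly into the measure difference $\omega([a,b]) - c\,\lambda([a,b])$, with $c$ equal to the density of the weighted CPS. First I fix orthonormal coordinates adapted to the splitting $\R^2 = G\oplus H$: write $e_G = \frac{1}{\sqrt{1+\alpha^2}}(1,\alpha)$ and $e_H = \frac{1}{\sqrt{1+\alpha^2}}(-\alpha,1)$, and use the arc-length coordinate $u$ along $G$ and $w$ along $H$, so that $dx\,dy = du\,dw$ and the orbit point $X(s)$ lifts to the point with $u = s\sqrt{1+\alpha^2}$ and $w=0$. In these coordinates the slice of $P$ at height $w=z$ is, by construction, a segment of length $h(z)$ (where $h\geq 0$, as the weights are lengths), so Fubini gives $\lambda(P)=\int_W h(z)\,dz = \dens\Lambda$, recalling $|\det M_\Gamma|=1$. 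This pins down the correct constant: I take $c=\lambda(P)=\int_W h$, and I will also note that no other value of $c$ can succeed, since any mismatch in density forces linear — hence unbounded — growth of $\omega([0,U])-cU$.

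The heart of the argument is the identity, uniform in $t$,
\[ \sqrt{1+\alpha^2}\,\int_0^t 1_P(X(s))\,ds \;=\; \omega([0,U]) + O(1), \qquad U = t\sqrt{1+\alpha^2}, \]
valid up to the orientation convention of the parametrization (the sign in $(s,\alpha s)-g$ reflects the half-line, which is immaterial for bounded distance equivalence since the family of all intervals is reflection-invariant). To prove it I unfold the torus indicator, $1_P(X(s)) = \sum_{g\in\Z^2} 1_{g+P}(s,\alpha s)$, and integrate term by term. For each admissible $g$ the full crossing of $g+P$ by the line contributes the $s$-length $h(\pi_2(g))/\sqrt{1+\alpha^2}$, which is exactly the weight $h(x^\star)$ of the point $x=\pi_1(g)\in\Lambda$ up to the factor $\sqrt{1+\alpha^2}$. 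Since the admissible $g$ project to a Delone subset of $G$ with bounded gaps, and the $u$-location of each crossing differs from the position of its point $x$ by at most $\mathrm{diam}(P)=O(1)$, the set of points whose crossing lies in $[0,t]$ and the set of $x\in\Lambda$ with $u$-coordinate in $[0,U]$ differ by only $O(1)$ points, each of weight at most $\sup h$. The two endpoints of the orbit segment lie in boundedly many translates $g+P$ (as $P\subset[0,1]^2$ has bounded diameter), so the partially swept crossings there contribute $O(1)$ as well. Collecting these bounded errors yields the displayed identity.

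Combining the two steps and writing $\int_0^t 1_P(X(s))\,ds = \Delta_t(P,\alpha,X(0)) + t\,\lambda(P)$, the density terms cancel exactly and I obtain
\[ \omega([0,U]) - c\,\lambda([0,U]) \;=\; \sqrt{1+\alpha^2}\,\Delta_t(P,\alpha,X(0)) + O(1). \]
Hence $|\Delta_t|$ is bounded in $t$ if and only if $|\omega([0,U])-c\,\lambda([0,U])|$ is bounded in $U$, which is the claim for intervals anchored at the origin. To pass to arbitrary intervals $[a,b]$ I use the cocycle relation $\Delta_{s+t}(P,\alpha,x)=\Delta_s(P,\alpha,x)+\Delta_t(P,\alpha,X(s))$: an interval $[a,b]$ corresponds to the orbit segment with starting parameter $s_a=a/\sqrt{1+\alpha^2}$ and duration $t'=(b-a)/\sqrt{1+\alpha^2}$, i.e.\ to $\Delta_{t'}(P,\alpha,X(s_a))$, and the cocycle relation shows that this stays bounded over all starting points $X(s_a)$ on the orbit precisely when the single quantity $\Delta_t(P,\alpha,X(0))$ is bounded. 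This closes both directions of the equivalence.

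I expect the main obstacle to be the bookkeeping in the second step: making the error terms genuinely $O(1)$ uniformly in $t$. One must simultaneously control (i) the partial crossings at the two ends of the orbit segment, (ii) the mismatch between ``the crossing of $g+P$ occurs during $[0,t]$'' and ``$x=\pi_1(g)$ has arc-length coordinate in $[0,U]$'', and (iii), in the non-convex polygon case of Theorem \ref{thm:mainpoly}, the possibility that a single translate $g+P$ is entered and exited several times. All three are tamed by the same two facts — $P$ has bounded diameter and the admissible lattice points project to a Delone subset of $G$ — but assembling them into one clean uniform bound is the delicate part.
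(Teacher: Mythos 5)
Your proposal is correct and follows essentially the same route as the paper's own proof: both arguments unfold the torus indicator over the integer translates $g+P$, identify each full chord of a translate with the weight $h(x^{\star})$ of the corresponding point of $\Lambda$, use Fubini to get $\lambda(P)=\int_W h$, and conclude that boundedness of $\Delta_t$ is equivalent to boundedness of $\omega([0,U])-c\,\lambda([0,U])$. The paper compresses this into a single asserted identity for $t\in\N$ and omits the bookkeeping you spell out (the $\sqrt{1+\alpha^2}$ normalization, partial crossings at the endpoints, the pinning down of $c$, and the passage to arbitrary intervals via the cocycle relation), so your version is, if anything, more careful than the original.
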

\begin{proof}
We compare $\Delta_t(P,\alpha)$ with $\sum\limits_{\substack
{x \in \Lambda\\0 \le x \le t}} h(x^{\star}) - 
t \int\limits_{h_1}^{h_2} h(s) ds$.
By construction we have $\lambda(P)=\int\limits_{h_1}^{h_2} h(s) ds$.
Also by construction, $h(x^{\star})$ is the width of the intersection
of the line segment $\{ (s,\alpha s)^T \, | \, \lfloor x \rfloor \le
s \le \lfloor x \rfloor+1 \}$ with a translation of $P$ by an integer vector. So for $t \in \N$ we have
\[
\sum\limits_{\substack{x \in \Lambda\\ 0 \le x \le t}} h(x^{\star}) - 
t \int\limits_{h_1}^{h_2} h(s) ds
= \int_0^t 1_P(s \bmod 1, \alpha s \bmod 1) dt - t \lambda(P) 
\]
Hence the right hand side is uniformly bounded if and only if the left 
hand side is.
\end{proof}

\begin{rem} \label{rem:almostall}
The authors of \cite{GL} give a precise meaning to the ``almost
all'' in Theorems \ref{thm:mainpoly} and \ref{thm:mainconvex}. 
Namely, the results hold for all $\alpha$ whose continued 
fraction expansion $\alpha=[a_0; a_1, a_2,\ldots]$ satisfies
\begin{equation} \label{eq:frac-alpha}
 \sum_{\ell=0}^m \frac{a_{\ell+1}}{q_{\ell}^{1/2}} \sum_{k=1}^{\ell+1}a_k < C ,
\end{equation}
where $C$ is a constant independent of $m$. Here, $(q_\ell)_{\ell \geq 0}$ is 
the sequence of best approximation denominators for $\alpha$.
In particular this implies that the results hold for all 
$\alpha=[a_0; a_1, a_2,\ldots]$ where the $a_i$ are uniformly bounded
by some constant $c$. This follows from the fact that the $q_n$ grow
at least as fast as $\tau^n$ (where $\tau=\frac{\sqrt{5}+1}{2}$). Then
the sum above is less than the convergent sum
\[ \sum_{\ell=0}^{\infty} \frac{c}{\tau^{\ell/2}} (\ell+1) c. \]
Since many $1 \times 1$ CPS in the literature use quadratic irrationals 
for the slope $\alpha$, and quadratic irrationals have periodic continued
fraction expansion, these results apply to most cases of $1 \times 1$ CPS
studied in the literature.
\end{rem}


\begin{rem}
The proofs of Theorems \ref{thm:mainpoly} and \ref{thm:mainconvex} from \cite{GL} are based on the connection between BRS and $1$-periodic bounded remainder functions. A $1$-periodic function $f:\R\longrightarrow \mathbb{C}$ is called a {\it bounded remainder function} with respect to an irrational number $\alpha$ if the there is a constant $C$ such that 
$$\left|\sum\limits_{k=0}^{N-1}f(k\alpha)-N\int_0^1f(x)dx \right|$$
for all integers $N>0$.

Two results \cite[Props. 2.5 and 2.6]{GL} by Grepstad and Larcher state that a $\Z$-periodization of a positive hat-function (``simplest'' continuous piecewise linear function with compact support) or a $\Z$-periodization of a positive dome-function (a certain twice-differentiable  function inside its compact support, continuous everywhere, with bounded growth/decay at the boundary points of support) are bounded remainder functions. These propositions are the main building blocks for the proofs of Theorems \ref{thm:mainpoly} and \ref{thm:mainconvex}.

In the same way as we have shown how one can transfer the notion of BRS to the notion of weighted CPS, we can transfer bounded remainder functions to weighted CPS and vice versa. The non-weighted CPS $\Lambda$ can be treated as a weighted CPS with non-continuous weight function $h$ being the indicator function of the window $W$. If $\Lambda$ is bounded distance equivalent (as a point set) to a lattice, then the corresponding bounded remainder function will be a $1$-periodic piecewise constant function. Later in Theorem \ref{thm:diraccomb} we will see that weighted $1\times 1$ CPS with many continuous weight functions are bounded distance equivalent to lattices. This is in contrast to the case of non-weighted CPS, where Kesten's theorem \cite{Kes} shows that in the non-weighted case the conditions are more restrictive.

We refer to \cite{Sch} for more discussion on the difference between continuous bounded remainder functions and piecewise constant bounded remainder functions.
\end{rem}

\section{Main results}

Using the results from the last section we can now prove
the following result.

\begin{thm} \label{thm:diraccomb}
Let $\Lambda$ be a $1 \times 1$ CPS with lattice $\Gamma = \Z^2$, 
$G = \big( \begin{smallmatrix} 1\\ \alpha \end{smallmatrix} \big) \R$ and $H=G^{\perp}$, window $W=[a,b] \subset H$,
and let $h$ be continuous on $H$ with support $W$ (i.e., $h$ vanishes outside $W$, 
and $h(x) \ne 0$ for $x$ in the interior of $W$). Furthermore, let $\alpha$ fulfill
the condition \eqref{eq:frac-alpha} in Remark $\ref{rem:almostall}$. 
\begin{enumerate}
\item If $h$ is piecewise linear, or
\item if $h$ is twice differentiable on $W$, and $h''$ is uniformly bounded
on $W$,
\end{enumerate}
then the weighted Dirac comb
$\omega = \sum\limits_{x \in \Lambda} h(x^{\star}) \delta_x$ 
is bounded distance equivalent to $m \lambda$, where 
$m = \int\limits_a^b h(t) dt$.
\end{thm}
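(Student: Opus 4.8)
The plan is to reduce the statement, through Lemma~\ref{lem:wcps-brs} and the fact that the weighted comb depends \emph{linearly} on the weight, to the two BRS theorems of \cite{GL}. Since $h$ has constant sign on the interior of $W$, after possibly replacing $h$ by $-h$ (which negates both $\omega$ and $m$, leaving the asserted equivalence unchanged) I may assume $h>0$ there. As already arranged before Lemma~\ref{lem:wcps-brs}, after splitting the CPS for a large enough $n$ and rescaling the weight by a common constant $c'>0$, I may assume that $W$ lies in the interior of the projection of $[0,1)^2$ and that every region occurring below fits into $[0,1)^2$; the rescaling multiplies both the comb and the target measure by $c'$, so dividing by $c'$ recovers the claim for the original $h$. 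Throughout I use coordinates $(v,s)$ adapted to the splitting $\R^2 = H \oplus G$, with $v$ running along $H=G^\perp$ and $s$ along $G$, in which the region built from a nonnegative weight $w$ is $\{(v,s): v\in[a,b],\ |s|\le \tfrac12 w(v)\}$.

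For part (1) the region $P$ attached to $h$ is a polygon: its upper and lower boundaries are the graphs $s=\pm\tfrac12 h(v)$ of piecewise linear functions, and since $h(a)=h(b)=0$ the region closes up to a single point at each end. I then check that $P$ has no edge of slope $\alpha$. In the $(v,s)$-frame the direction $G$ is the $s$-axis, so an edge of slope $\alpha$ would be a vertical segment; but every boundary edge is part of a graph over $v$, hence has finite $s$-over-$v$ slope, and the two end vertices carry no edge because the width vanishes there. Thus Theorem~\ref{thm:mainpoly} shows $P$ is a BRS, and Lemma~\ref{lem:wcps-brs} gives $\omega\bd m\lambda$.

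For part (2) the obstacle is that the same region $P$ is in general neither a polygon nor a smooth convex set of positive curvature: with $h''$ bounded, $h$ is essentially linear near the endpoints, so $P$ has corners at its two tips, and wherever $h''>0$ the boundary curves inward, so $P$ need not be convex. The device I would use is to convexify by subtraction. Fix $g(v)=C\sqrt{(v-a)(b-v)}$ with $C$ so large that $g''\le -\sup_{W}|h''|-\delta$ on $(a,b)$ for some $\delta>0$ and $g>h$ on $(a,b)$; such a $C$ exists because $g''$ attains its negative maximum at the midpoint and can be pushed below any bound by scaling, while $h$ is fixed and bounded. Then the region $P_g$ attached to the weight $g$ is an ellipse, hence a twice continuously differentiable convex curve of positive curvature, and the region $P_{g-h}$ attached to the weight $g-h\ge 0$ is again convex with positive curvature: in the interior $(g-h)''=g''-h''\le-\delta<0$, and near each endpoint the $\sqrt{\cdot}$ behaviour of $g$ dominates the linear behaviour of $h$, so the tip has the same parabolic-vertex structure $v-a = c\,s^2 + O(s^3)$ (with $c>0$) as an ellipse and in particular positive curvature there. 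Applying Theorem~\ref{thm:mainconvex} to $P_g$ and to $P_{g-h}$, together with Lemma~\ref{lem:wcps-brs}, the combs $\omega_g$ and $\omega_{g-h}$ are bounded distance equivalent to $\bigl(\int_a^b g\bigr)\lambda$ and $\bigl(\int_a^b (g-h)\bigr)\lambda$ respectively. Since the weight is additive, $\omega=\omega_h=\omega_g-\omega_{g-h}$, and subtracting the two equivalences yields $\omega\bd m\lambda$ with $m=\int_a^b g-\int_a^b(g-h)=\int_a^b h$.

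I expect the genuine difficulty to sit entirely in part (2), specifically in the verification that the subtracted region $P_{g-h}$ still meets the smooth positive-curvature hypothesis of Theorem~\ref{thm:mainconvex} at its two tips; this is exactly where the boundedness of $h''$ is used. A careful local expansion of the boundary curve near each tip, showing that it is a genuine $C^2$ arc with positive curvature despite the non-smooth cross terms coming from $g\cdot h$, is the one step I would need to carry out in full. Everything else is the linearity of the comb (and of the discrepancy) in the weight, together with the bookkeeping of the preliminary rescaling and splitting.
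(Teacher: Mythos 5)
Your proposal follows the same skeleton as the paper's proof: reduce everything to bounded remainder sets via Lemma~\ref{lem:wcps-brs}; settle the piecewise linear case by applying Theorem~\ref{thm:mainpoly} directly to the region between the graphs $s=\pm\tfrac12 h(v)$ (which, as you note, has no edge parallel to $G$); and settle the smooth case by writing $h$ as a difference of two concave weights whose associated regions satisfy Theorem~\ref{thm:mainconvex}, then subtracting. That you subtract at the level of measure equivalences while the paper subtracts at the level of BRSs (Lemma~\ref{lem:sum-diff-brs}) is immaterial; the two bookkeepings are interchangeable. Your explicit reduction to $h>0$ is a point the paper leaves implicit (its containment $P'\subset P$ tacitly needs $h\ge 0$).

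The one genuine divergence is the choice of the concave majorant, and it is consequential. The paper takes $c_1 f$ where $f$ is the width function of a big circle whose support contains $W$ in its \emph{interior}; consequently the tips of the subtracted region lie in $\supp(f)\setminus W$, where that region coincides with an ellipse, and no local analysis at the tips is needed at all --- this is precisely what the enlarged support buys. Your $g(v)=C\sqrt{(v-a)(b-v)}$ is supported exactly on $W$, so the tips of $P_{g-h}$ sit at $(a,0)$ and $(b,0)$, and you must prove the boundary is still a $C^2$ curve of positive curvature there; this is the step you flag as not carried out. Your sketch is in fact correct and completable: parametrizing the boundary near a tip by the $G$-coordinate $s$, one gets $\chi''(s)=-4(g-h)''(\chi(s))/\bigl((g-h)'(\chi(s))\bigr)^3\to 8/\bigl(C^2(b-a)\bigr)>0$ as $s\to 0$, equivalently $v=a+\tfrac{4}{C^2(b-a)}\,s^2+c_3|s|^3+O(s^4)$ with $|s|^3$ being $C^2$; here the uniform bound on $h''$ and $h(v)=h'(a^+)(v-a)+O((v-a)^2)$ are exactly what is used. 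So your route works, but it pays at the tips a price the paper's majorant was designed to avoid. In exchange, your variant has no ``junction points'': in the paper's construction the boundary of $P'$ can have corners at $v=a,b$ whenever $h'(a^+)\ne 0$ or $h'(b^-)\ne 0$ (since $h$ is subtracted only on $W$), an issue the paper's proof does not address, whereas your $P_{g-h}$ has no such points. To turn your proposal into a complete proof, either write out the tip expansion above in full, or simply enlarge the support of your majorant as the paper does.
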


\begin{rem}
Though the theorem above is stated for a scaled Lebesgue measure $m\lambda$, it is also true for any measure which is bounded distance equivalent to $m\lambda$. In particular we can use any $t$-periodic measure $\mu$ with $\mu([0,t[)=mt$, or the Dirac comb associated with the lattice $\frac{1}{m}\mathbb{Z}$ of density $m$, or its translates. 

Indeed, let $\mu$ be a $t$-periodic measure with $\mu([0,t[)=m$. Given $a,b$ with $a<b$, let $n$ by the largest integer such that $a+tn\leq b$. Then
$$\mu([a,b])=\mu([a,a+tn[)+\mu([a+tn,b])=n\cdot mt+\mu([a+tn,b]),$$
and the difference $|\mu([a,b])-m\lambda([a,b])|=|\mu([a+tn,b])-m\lambda([a+tn,b])|$ does not exceed $mt$.
\end{rem}

\begin{proof}
Let us first assume that $h$ is twice differentiable on $W$, and $h''$ 
is uniformly bounded on $W$. Choose a compactly supported 
twice differentiable $f$ (we require that $f$ must be twice differentiable in the interior of its support, not at the endpoints), such that the support of $h$ is 
contained in the interior of the support of $f$, and such that there is $c_0>0$
such that the second derivative of $f$ is less than $-c_0$. We will take
$f$ to be the width function of an appropriate big circle $\omega$. Choose 
$c_1>0$ such that the second derivative of $c_1 f-h$ is bounded away
from 0. I.e., there is $c_2<0$ such that for all $t \in W$ holds:
$(c_1 f(t)-h(t))''< c_2 $. Then $c_1 f-h$ is twice differentiable, $c_1 f-h$ has 
negative second derivative less than $c_2<0$, and consequently $c_1f-h$ 
is convex. 

Both $c_1 f$ and $c_1 f -h$ yield convex sets $P$, $P'$ that 
fulfill the conditions of Theorem \ref{thm:mainconvex}:
The convex set $P$ for $c_1 f$ is just an ellipse which is the $c_1$-dilation of the circle $\omega$. Therefore $P$ has positive curvature as any ellipse has positive curvature (this can be checked using the parametrization $x=a\cos \theta, y=b\sin \theta$ and the formulas from Remark \ref{rem:curvature}). As $P'$ 
we might again choose the region between the graphs of
$\pm\frac{1}{2}(c_1f-h)$. The curvature of $P'$ is positive in $W$ because the second derivative of both functions $\pm\frac{1}{2}(c_1f-h)$ is bounded from zero by $\pm\frac{c_2}{2}$ in $W$. Hence the numerator from the formulas of Remark \ref{rem:curvature} can not equal $0$. The curvature of $P'$ is positive in $\supp(f)\setminus W$ because the functions $\pm\frac{1}{2}(c_1f-h)$ coincide with $\pm\frac{c_1f}{2}$ in $\supp(f)\setminus W$, and therefore the curvature of $P'$ is equal to the curvature of $P$ in $\supp(f)\setminus W$, hence non-zero.

Thus both $P$ and $P'$ yield BRS. 
By Lemma \ref{lem:sum-diff-brs} the difference 
$P \setminus P'$ of two BRS $P, P'$ with $P' \subset P$ is 
again a BRS, hence $h$ corresponds to a BRS,
too. By Lemma \ref{lem:wcps-brs} the claim follows.

The case of piecewise linear $h$ is handled analogously. 
Note that if $h$ is piecewise linear and continuous on $H$, 
then the corresponding polygon $P$ has no edge parallel
to $\big( \begin{smallmatrix} 1\\ \alpha \end{smallmatrix} \big)$. 
\end{proof}

Since Lemma \ref{lem:wcps-brs} and Lemma \ref{lem:sum-diff-brs} 
imply that the sum $\mu_1+\mu_2$ of two measures $\mu_1, \mu_2$
that are bounded distance equivalent with $c_1 \lambda$, respectively 
$c_2 \lambda$, is bounded distance equivalent to $(c_1+c_2) \lambda$, 
the following result is immediate.

\begin{cor}
Any linear combination of Dirac combs as in Theorem $\ref{thm:diraccomb}$
is again bounded distance equivalent to $c \lambda$, for some appropriate
$c>0$. 
\end{cor}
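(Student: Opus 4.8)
The plan is to establish the corollary purely from the algebraic behaviour of the relation $\bd$ on measures, without re-examining the geometric hypotheses of Theorem \ref{thm:diraccomb}. Write the given linear combination as $\omega = \sum_{i=1}^{k} a_i \omega_i$, where each $\omega_i = \sum_{x \in \Lambda} h_i(x^{\star})\delta_x$ is a weighted Dirac comb satisfying the hypotheses of Theorem \ref{thm:diraccomb} (case (1) or (2)) and $a_i \in \R$. By the theorem each summand is individually controlled: $\omega_i \bd m_i \lambda$ with $m_i = \int_{\R} h_i(t)\,dt$, which by \eqref{eq:dens-weighted} is just the total density carried by $\omega_i$.

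First I would record two elementary stability properties of $\bd$ directly from Definition \ref{def:bdmes}. If $\mu \bd \nu$ with constant $C$, then for any scalar $a$ and any interval $[p,q]$ one has $|a\mu([p,q]) - a\nu([p,q])| = |a|\,|\mu([p,q]) - \nu([p,q])| < |a|\,C$, so $a\mu \bd a\nu$. Similarly, if $\mu_1 \bd \nu_1$ with constant $C_1$ and $\mu_2 \bd \nu_2$ with constant $C_2$, then the triangle inequality gives $|(\mu_1+\mu_2)([p,q]) - (\nu_1+\nu_2)([p,q])| < C_1 + C_2$, so $\mu_1 + \mu_2 \bd \nu_1 + \nu_2$. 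This additivity is precisely the statement derived from Lemmas \ref{lem:wcps-brs} and \ref{lem:sum-diff-brs} in the paragraph preceding the corollary. Combining the two properties by induction on $k$, I would conclude $\omega = \sum_i a_i \omega_i \bd \bigl(\sum_i a_i m_i\bigr)\lambda = c\lambda$, with $c = \sum_{i=1}^{k} a_i m_i$ and associated constant at most $\sum_i |a_i|\,C_i$, where $C_i$ is the constant attached to $\omega_i \bd m_i\lambda$.

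The only point requiring attention is the sign condition $c > 0$. Since each $h_i$ is continuous, nonzero on the interior of its window, and vanishing at the endpoints, it cannot change sign there by the intermediate value theorem, so $m_i \neq 0$; a general real combination could nevertheless produce $c \le 0$. In the intended setting of a positive combination of genuine weighted CPS (all $h_i \ge 0$ and $a_i > 0$) each $m_i > 0$, whence $c = \sum_i a_i m_i > 0$, as asserted.

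I do not expect a genuine obstacle: the entire content is the two one-line estimates above, which is exactly why the paper calls the result immediate. The one thing worth making explicit is that the combined weight $\sum_i a_i h_i$ need not itself satisfy the hypotheses of Theorem \ref{thm:diraccomb} — its support may be disconnected, or it may vanish somewhere in the interior of its support. This is precisely the reason the argument must be carried out at the level of the relation $\bd$ and its compatibility with scalar multiplication and addition, rather than by a second, direct application of the theorem to the combined comb.
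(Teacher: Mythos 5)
Your proof is correct, but it takes a genuinely different route from the paper's. The paper obtains the key additivity property --- that $\mu_1+\mu_2 \bd (c_1+c_2)\lambda$ whenever $\mu_1 \bd c_1\lambda$ and $\mu_2 \bd c_2\lambda$ --- by passing through the BRS correspondence: it invokes Lemma \ref{lem:wcps-brs} to translate each comb into a bounded remainder set and Lemma \ref{lem:sum-diff-brs} to combine the resulting sets, with scalar multiples handled implicitly by absorbing the coefficient into the weight function (the hypotheses of Theorem \ref{thm:diraccomb} are stable under scaling $h$). You instead prove additivity and scalar homogeneity of the relation $\bd$ directly from Definition \ref{def:bdmes}, using only the triangle inequality, and then induct on the number of summands. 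Your route is more elementary and strictly more general: it applies to arbitrary measures bounded distance equivalent to multiples of $\lambda$, it yields explicit constants ($\sum_i |a_i| C_i$), and it avoids the disjointness arrangement of the sets $P_i$ inside $[0,1]^2$ that the paper's appeal to Lemma \ref{lem:sum-diff-brs} quietly requires. It also surfaces a caveat the paper glosses over: for a general real linear combination the resulting $c=\sum_i a_i m_i$ need not be positive, so the assertion $c>0$ in the corollary really presupposes a positive combination (or at least coefficients for which the sum comes out positive); your intermediate-value argument that each $m_i\neq 0$, and your closing observation that the combined weight $\sum_i a_i h_i$ may fail the hypotheses of Theorem \ref{thm:diraccomb} (disconnected support, interior zeros) --- which is exactly why the argument must run at the level of the relation $\bd$ --- are both correct and worth making explicit.
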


Theorem \ref{thm:diraccomb} holds for almost all $\alpha$, 
more precisely: for all $\alpha$ fulfilling Equation \eqref{eq:frac-alpha}. 
In particular, Theorem \ref{thm:diraccomb} holds for all $\alpha$ 
with bounded values in their continued fraction expansion. 
However, there is no particular example of an algebraic number
of degree larger than two where it is known whether the 
values in its continued fraction expansion are bounded.
Fortunately, many $1 \times 1$ CPS in the literature arise 
from two-letter substitutions \cite{BG}. The slope $\alpha$ for a CPS for 
some two letter substitution is always a quadratic irrational,
compare for instance with Example \ref{ex:fib1}. Since quadratic
irrationals have periodic continued fraction expansions, Theorem 
\ref{thm:diraccomb} holds for all quadratic irrationals $\alpha$.
For a further discussion of the connection between symbolic substitutions
or tile substitutions and (non-weighted) CPS see \cite{HoSo}, or, in the context of BRS,
see \cite{FG} and references therein.

Unfortunately, the most natural way to describe a CPS for
a two-letter substitution is to use a lattice different to $\Z^2$,
namely the one spanned by the vectors $(1,1)^T, (\beta, \beta')^T$,
where $1,\beta$ are the natural tile lengths, and $\beta'$ is the 
algebraic conjugate of $\beta$, see \cite{BG} for details. 
Hence $\beta \beta' = \frac{p}{q} \in \Q$.

The following corollary shows how we can transform Theorem \ref{thm:diraccomb} in order to make it applicable to all weighted CPS with appropriate weight function provided underlying non-weighted CPS arises from a two-letter symbolic substitutions.

\begin{cor}\label{cor:quad}
Let $\beta$ be a quadratic irrational. Let $\Lambda$ be a 
weighted $1 \times 1$ CPS with $G=\R$, 
$\Gamma = \langle (1,1)^T, (\beta, \beta')^T \rangle_{\Z}$,
the window $W=[a,b]$ an interval in $H$ and $h$ as in Theorem 
$\ref{thm:diraccomb}$. Then the Dirac comb
$\omega = \sum\limits_{x \in \Lambda} h(x^{\star}) \delta_x$ 
is bounded distance equivalent to $m \lambda$ where 
$m = \frac{1}{|\beta-\beta'|} \int\limits_a^b h(t) dt$.
\end{cor}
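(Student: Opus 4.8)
The plan is to reduce Corollary~\ref{cor:quad} to Theorem~\ref{thm:diraccomb} by a single linear change of coordinates that normalises the lattice to $\Z^2$. Write $M_\Gamma = \big(\begin{smallmatrix} 1 & \beta \\ 1 & \beta' \end{smallmatrix}\big)$, so that $\Gamma = M_\Gamma \Z^2$ and $\det M_\Gamma = \beta' - \beta$; with $G = \R \times \{0\}$ and $H = \{0\} \times \R$ and the coordinate projections, the CPS is $\Lambda = \{\, m + n\beta : m + n\beta' \in [a,b] \,\}$ with the point $m+n\beta$ carrying weight $h(m+n\beta')$. First I would push everything forward by $T := M_\Gamma^{-1}$. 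Then $T\Gamma = \Z^2$, the direct line $G$ is carried to a line $L = (1,\alpha)^T\R$ whose slope $\alpha$ lies in the field $\Q(\beta)$, and $\alpha$ is irrational since $1,\beta'$ are $\Q$-linearly independent; hence $\alpha$ is a quadratic irrational, its continued fraction expansion is eventually periodic, and condition~\eqref{eq:frac-alpha} from Remark~\ref{rem:almostall} is satisfied (composing $T$ with a reflection if needed, we may take $\alpha > 0$). The internal line $H$ is carried to a line $H' = T(H)$, and the window to the interval $T([a,b]) \subset H'$.

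The one genuine subtlety is that $T$ is not conformal, so the projections induced on the transformed comb $\omega'$ (onto $L$ along $H'$, and onto $H'$ along $L$) are oblique, whereas Theorem~\ref{thm:diraccomb} is phrased for the orthogonal projections onto $L$ and $L^{\perp}$. I would bridge this by comparing $\omega'$ with the honestly orthogonal weighted CPS $\omega''$ carried by the same lattice $\Z^2$ and the same direct line $L$, with internal space $L^{\perp}$, window $W''$ equal to the orthogonal cross-section of the bounded strip $L + T([a,b])$, and weight $h''$ obtained from $h$ by the affine reparametrisation identifying the oblique cross-section $T([a,b])$ with the orthogonal cross-section $W''$. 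The key points are that $\omega'$ and $\omega''$ select exactly the same lattice points, since selection depends only on the strip, which is common to both; that for each selected point the oblique and the orthogonal projection onto $L$ differ by at most the (bounded) width of the window; and that the oblique internal coordinate is a function of the orthogonal one along the strip (two points with equal orthogonal internal coordinate differ by an element of $L$, which the oblique projection kills), so the two weights coincide under the reparametrisation. Since an affine reparametrisation preserves continuity, compact interval support, piecewise linearity, and twice differentiability with a uniformly bounded second derivative, $h''$ still satisfies hypothesis (1) or (2) of Theorem~\ref{thm:diraccomb}, and boundedness of $W$ is exactly what makes $\omega' \bd \omega''$ in the sense of Definition~\ref{def:bdmes}.

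With this in hand the conclusion follows quickly. Theorem~\ref{thm:diraccomb} applies to $\omega''$ and yields $\omega'' \bd \big(\int_{W''} h''\,dt\big)\lambda$; equivalently one may run its argument directly on the affine image $T(P)$ of the region $P$ associated with $h$, since affine maps preserve convexity, the non-vanishing of curvature, and parallelism of edges, so $T(P)$ still meets the hypotheses of Theorem~\ref{thm:mainpoly} or~\ref{thm:mainconvex} for the slope $\alpha$. Transporting back along $T$, which restricts to a linear isomorphism of the direct line and is therefore bi-Lipschitz, preserves bounded distance equivalence (rescaling the line only rescales the comparison measure), so $\omega \bd m\lambda$ for some $m > 0$. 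Rather than track the scaling constant through $T$, I would fix $m$ by density: if $\omega \bd m\lambda$, then dividing $|\omega([0,s]) - m\lambda([0,s])| < C$ by $s$ and letting $s \to \infty$ forces $m = \dens \Lambda$, and the weighted density formula~\eqref{eq:dens-weighted} gives $\dens \Lambda = \frac{\int_a^b h(t)\,dt}{|\det M_\Gamma|} = \frac{1}{|\beta - \beta'|}\int_a^b h(t)\,dt$, which is the claimed value. The main obstacle is precisely the oblique-versus-orthogonal reconciliation of the middle paragraph: it is the one place where the naive ``just change coordinates'' argument needs care, and where boundedness of the window is indispensable.
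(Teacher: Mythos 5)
Your proposal is correct and follows essentially the same route as the paper: conjugate by $M_\Gamma^{-1}$ to map the lattice to $\Z^2$, observe that the resulting slope is a quadratic irrational so that condition \eqref{eq:frac-alpha} of Remark \ref{rem:almostall} holds, apply Theorem \ref{thm:diraccomb}, transport back along the (regular) linear map, and fix the constant $m$ via the density formula \eqref{eq:dens-weighted}. The only difference is that you make explicit the oblique-versus-orthogonal projection reconciliation, which the paper's proof compresses into the single assertion that ``$M$ preserves the properties of $h$''; this is a careful elaboration of the same argument rather than a different approach.
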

\begin{proof}
The lattice $\Gamma$ can be mapped to the standard integer lattice 
$\Z^2$ by applying some matrix $M$, where $M^{-1}=\big( \begin{smallmatrix}
1 & \beta\\ 1 & \beta' \end{smallmatrix} \big)$. Hence  
$M= \frac{1}{\beta-\beta'} \big( \begin{smallmatrix}
-\beta' & \beta \\ 1 & -1 \end{smallmatrix} \big)$. The slope
$\alpha$ of Theorem \ref{thm:diraccomb} is then 
\[ \alpha = M \big( \begin{smallmatrix} 1 \\ 0 \end{smallmatrix} \big)
= \frac{1}{\beta-\beta'} \big( \begin{smallmatrix} -\beta' \\ 1 
\end{smallmatrix} \big). \]
Hence 
\[ \alpha \R =  \big( \begin{smallmatrix} -\beta' \\ 1 
\end{smallmatrix} \big) \R =  \big( \begin{smallmatrix} \frac{-p}{q} \\ \beta 
\end{smallmatrix} \big) \R. \]
Because of the symmetry of $\Z^2$ the slope $(\frac{-p}{q},\beta)^T$ 
yields the same CPS as the slope $(\frac{p}{q}, \beta)^T$, respectively
the slope $(1,\frac{q}{p} \beta)^T$. Hence the slope $\alpha$ equals 
$\frac{q}{p}\beta$. In particular, $\alpha$ is a quadratic irrational as well.
Furthermore, $M$ preserves the properties 
of $h$. By Theorem \ref{thm:diraccomb} the resulting CPS $\Lambda'$
is bounded distance equivalent to $c' \lambda$ for some appropriate 
$c'$. Since the original CPS is just the image of $\Lambda'$ under some
(regular) linear map, $\Lambda$ is also bounded distance equivalent to
$m \lambda$ for some appropriate $m$. By the density formula for weighted
CPS \eqref{eq:dens-weighted} holds $m = \frac{1}{|\det(M^{-1})|} = 
\frac{1}{|\beta-\beta'|} \int\limits_a^b h(t) dt$.
\end{proof}

\begin{exam}\label{ex:fib2}
The (symbolic) Fibonacci sequence can be generated by applying
the map $\sigma \colon  \, a \mapsto ab$, $b \mapsto a$ repeatedly to 
the letter pair $a|a$: $\sigma(a|a)=ab|ab$, $\sigma^2(a)=aba|aba$, 
$\sigma^4(a)=abaababa|abaababa$, $\sigma^6(a)=abaababaabaababaababa|
abaababaabaababaababa$, $\ldots$. This symbolic sequence can be 
transformed into a Delone set in $\R$ by assigning an interval of length 
$\tau = \frac{\sqrt{5}+1}{2}$ to $a$ and an interval of length 1 to $b$. 
The corresponding Delone set $\Lambda$ then consists of the endpoints of the intervals.
This Delone set can be defined via a CPS, too, and the corresponding CPS is given in \ref{ex:fib1}.

Since $\alpha$ is a quadratic irrational, then we can apply Corollary \ref{cor:quad} to weighted CPS defined with the data of the Fibonacci sequence with an appropriate weight function $h$. In particular, if $h$ is continuously twice differential or continuous piecewise linear and supported by the window of $\Lambda$, then the corresponding weighted CPS is bounded distance equivalent to $m\lambda$ with appropriate $m$.

The original Fibonacci sequence can be treated as a weighted CPS with $h$ being the indicator function of $W$. The corresponding Dirac comb is bounded distance equivalent to $m\lambda$ for some $m$ due to Kesten's theorem \cite{Kes}. However if $h$ is the indicator function of (either) half of $W$ then the resulted weighted CPS corresponds to a Half-Fibonacci sequence and is not bounded distance equivalent to $m\lambda$ for any $m$ due to Kesten's theorem again. Here we would like to refer to \cite{Sch} for more details between continuous and piecewise constant bounded remainder functions.
\end{exam}

\section{A remark on higher dimensions}

Most of the basic objects discussed in this paper can be generalized in higher dimensions. In particular, the definition of weighted CPS will not change if we set direct space to be $d$-dimensional, so $G=\R^d$, and internal space to be $n$-dimensional, so $H=\R^n$.

However, if $d\geq 2$, then the definition of bounded distance equivalent measures will probably be more complicated than in the one-dimensional case. As it can be seen from \cite[Section 1]{Lac}, even for an (unweighted) Dirac comb $\mu$, the condition we need to check in order to see whether $\mu$ is bounded distance equivalent to a Dirac comb corresponding to a lattice (in the same sense as bounded distance equivalence of discrete sets), it is not enough to check the discrepancy of measures on one sequence of growing regions, balls or cubes. Probably, the best definition of bounded distance equivalence for measures will be the definition related to a {\it transportation measure} from \cite{ST}. The transformation from weighted CPS to BRS will work in this case to some extent. For example, a BRS should be defined using $d$-dimensional integrals in that case. However, we don't have any results in this direction.

If $d=1$ but $n>1$, which is the case of one-dimensional CPS with higher dimensional internal space, then all the notions that are defined for objects in the direct space, including bounded distance equivalence for measures, stay the same. However the notions defined in the internal space should be substituted with their higher-dimensional analogs. In particular, the weight function $h$ for a weighted CPS should be defined on an $n$-dimen\-sion\-al region. The transformation from ``weighted CPS bounded distance equivalent to a lattice'' to ``BRS of continuous rotation'' will work in the same way as in the case of $1\times 1$ weighted CPS, but the corresponding BRS now will be in $(n+1)$-dimensional torus $\mathbb{T}^{n+1}=[0,1]^{n+1}$. In this case we are unaware about any results, except the results in \cite{GLev} that can be transformed to unweighted CPS using the approach from \cite{HK}.

\section*{Acknowledgment}
Both authors are grateful to the anonymous referees for several valuable remarks.
DF thanks the Research Centre of Mathematical
Modelling (RCM$^2$) at Bielefeld University for financial support. AG thanks CRC 701 at Bielefeld University for financial support and hospitality.

\end{document}